\theoremstyle{plain}
\newtheorem{theorem}{Theorem}[section]
\newtheorem{lemma}[theorem]{Lemma}
\newtheorem{proposition}[theorem]{Proposition}
\newtheorem{corollary}[theorem]{Corollary}
\theoremstyle{definition}
\newtheorem{example}[theorem]{Example}
\theoremstyle{remark}
\begin{document}

\title[The Laplacian spectrum of power graphs of some finite abelian $\mbox{p}$-groups]{The Laplacian spectrum of power graphs of some finite abelian $\mbox{p}$-groups}

\author[Ankit Raj]{Ankit Raj}
\address{Department of Mathematics, Central University of South Bihar, Patna, India}
\email{ankitraj@cusb.ac.in}

\author[Shubh N. Singh]{Shubh N. Singh}
\address{Department of Mathematics, Central University of South Bihar, Patna, India}
\email{shubh@cub.ac.in}


\begin{abstract}
The power graph $\mathcal{G}(G)$ of a group $G$ is a simple graph whose vertices are the elements of $G$ and two distinct vertices are adjacent if one is a power of other. In this paper, we investigate the Laplacian spectrum of the power graph $\mathcal{G}(\mathbb{Z}_{p^m}^n)$ of finite abelian $p$-group $\mathbb{Z}_{p^m}^n$. In particular, we prove that the spectrum of group $\mathbb{Z}_{p^m}^n$ is contained in the Laplacian spectrum of graph $\mathcal{G}(\mathbb{Z}_{p^m}^n)$. For a finite abelian group $G$ whose power graph $\mathcal{G}(G)$ is planar, we also prove that the spectrum of group $G$ is contained in the Laplacian spectrum of graph $\mathcal{G}(G)$.
\end{abstract}

\subjclass[]{05C25; 05C50}

\keywords{Group, Spectrum of group, Power graph, Laplacian spectrum}

\maketitle

\section{Introduction}

The concept of groups and graphs have various points of contact and their connections have been well studied in the literature. Kelarev and Quinn in their seminal paper \cite{kela00} introduced the notion of \emph{power digraph} of a semigroup as a digraph whose vertex set is the semigroup, and there is an arc from vertex $u$ to the other vertex $v$ whenever $v$ is a power of $u$. Motivated by this concept, Chakrabarty, Ghosh and Sen \cite{chak09} defined the \emph{power graph} $\mathcal{G}(S)$ of a semigroup $S$ as a simple graph with $S$ as its vertex set, and there is an edge between two distinct vertices if one is a power of the other. The topic of power graphs has continued to attract the attention of many researchers \cite{aba13}.

\vspace{0.1cm}
It can be easily seen that the power graph of a finite group is connected. For a finite group $G$, Chakrabarty et al. \cite{chak09} proved that the power graph $\mathcal{G}(G)$ is complete if and only if $G$ is a cyclic group of order $p^m$ for some prime number $p$ and non-negative integer $m$. Cameron and Ghosh \cite{came11} proved that two finite abelian groups with isomorphic power graphs are isomorphic. Further, they conjectured that two finite groups with isomorphic power graphs have the same number of elements of each order. In \cite{came10}, Cameron proved the conjecture affirmatively. Cameron and Ghosh \cite{came11} listed finite groups which have the same automorphism group as its power graph.  Chelvam and Sattanathan \cite{chel13} listed  finite abelian groups whose power graphs are planar. The power graphs have been characterized for different types of finite groups \cite{chapan14, chel13, mck14}.

\vspace{0.1cm}
A finite simple graph can be represented by different kinds of square matrices. The eigenvalues of these matrices have been of deep interest for combinatorics and graph theory. In this context, researchers have studied adjacency spectrum and Laplacian spectrum of power graphs of finite groups. Chattopadhyay and Panigrahi \cite{chapan15} studied Laplacian spectrum and algebraic connectivity of the power graph of additive finite cyclic group and the dihedral group. Further, they concluded that the power graph of additive cyclic group of order $n$ and the dihedral group of order $2n$ are Laplacian integral when $n$ is either a prime power or a product of two distinct primes. Mehranian et al. \cite{mehr17} computed the adjacency spectrum of power graphs of cyclic groups, dihedral groups, elementary abelian groups of prime power order and the Mathieu group $M_{11}$. The matrices of the power graphs of certain finite groups have also been studied in different contexts \cite{chat17, chatt17}.

\vspace{0.1cm}
The remainder of the paper is organized as follows. In Section 2, we introduce our notation, recall the necessary concepts, and then state some necessary known results. The main results are presented in Section 3. Finally, Section 4 concludes the paper.

\section{Preliminaries and Notation}
In this section we introduce some basic concepts of groups, graphs, and matrices and fix the notation used throughout this paper. The symbols $p$ and $n$ are always used to denote a prime number, and a positive integer, respectively. We use $\phi(n)$ to denote the value of the Euler's totient function at positive integer $n$.

\vspace{0.1cm}
Let $G$ be a finite group. The order of $G$ is denoted by $|G|$ and the order of an element $g\in G$ by $|g|$. The \emph{spectrum} of $G$, denoted as $\omega(G)$, is the set of its element orders. The cyclic subgroup of $G$ generated by $g\in G$ is denoted by $\langle g\rangle$. The symbol $\mathbb{Z}_m$ stands for the cyclic group of order $m$. We always assume $\mathbb{Z}_m = \{0,1,\ldots, m-1\}$. The notation $\mathbb{Z}_m^n$ means that the direct product of $n$ copies of $\mathbb{Z}_m$. All further unexplained notation and terminology of groups we refer \cite{dum99}.

\vspace{0.1cm}
Let $\Gamma = (V, E)$ be a finite simple graph. The degree of a vertex $v$ of $\Gamma$ is denoted by $\deg(v)$. A complete graph on $n$ vertices is denoted by $K_n$. Let $\Gamma = (V, E)$ and $\Gamma' = (V', E')$ be two disjoint  finite simple graphs. The \emph{union} of $\Gamma$ and $\Gamma'$, denoted $\Gamma \cup \Gamma'$, is a graph whose vertex set is $V \cup V'$ and edge set is $E\cup E'$. We shall write $k\Gamma$ for the union of $k$ disjoint copies of $\Gamma$.  The \emph{join} of $\Gamma$ and $\Gamma'$, denoted $\Gamma + \Gamma'$, is a graph whose vertex set is $V\cup V'$ and edge set is $E\cup E' \cup \{\{v, v'\}\;|\; v\in V,\; v'\in V'\}$. We say that the graph $\Gamma$ is \emph{planar} if $\Gamma$ can be embedded in the plane so that no two edges intersect except at a vertex.

\vspace{0.1cm}
Let $\Gamma$ be a finite simple graph. Its \emph{Laplacian matrix} is the matrix $L(\Gamma) = D(\Gamma) - A( \Gamma)$, where  $D(\Gamma)$ is the diagonal matrix of vertex degrees of $\Gamma$ and $A(\Gamma)$ is the adjacency matrix of $\Gamma$. The \emph{Laplacian polynomial} of $\Gamma$, denoted as $\Theta(\Gamma, x)$, is the characteristic polynomial of $L(\Gamma)$. The eigenvalues of $L(\Gamma)$ are called the \emph{Laplacian eigenvalues} of $\Gamma$. A graph is called \emph{Laplacian integral} if all its Laplacian eigenvalues are integers. The \emph{Laplacian spectrum} of $\Gamma$, denoted as $\mbox{L-spec}(\Gamma)$, is the multiset of its Laplacian eigenvalues. Note that $L(\Gamma)$ is a positive semi-definite matrix. It therefore has non-negative real eigenvalues. If $0 = \mu_1 < \mu_2 < \cdots < \mu_t$ are distinct Laplacian eigenvalues of $\Gamma$ with algebraic multiplicity $m_1, m_2, \ldots, m_t$, respectively, then we shall denote the Laplacian spectrum of $\Gamma$ by $\mbox{L-spec}(\Gamma)= \{\mu_1^{m_1}, \mu_2^{m_2}, \ldots, \mu_t^{m_t}\}$. For further basic definitions concerning graphs and matrices associated with graphs we refer \cite{bapat14, west00}. We end this section with the following known results which we require in the next section.

\begin{theorem}\cite{mohar91}\label{d-union}
Let $\Gamma$ be the disjoint union of graphs $\Gamma_1, \Gamma_2, \ldots, \Gamma_k$. Then
\[\Theta(\Gamma, x) = \prod_{i=1}^{k}\Theta(\Gamma_i, x).\]
\end{theorem}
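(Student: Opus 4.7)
The plan is to exploit the block-diagonal structure that the Laplacian inherits from a disjoint union. First I would fix, for each $i$, an ordering of the vertex set $V_i$ of $\Gamma_i$, and then order the vertex set $V$ of $\Gamma$ by concatenating these orderings: vertices of $\Gamma_1$ first, then of $\Gamma_2$, and so on. Under this ordering, the adjacency matrix $A(\Gamma)$ is block-diagonal with diagonal blocks $A(\Gamma_1), A(\Gamma_2), \ldots, A(\Gamma_k)$, because the disjoint union has no edges between distinct $\Gamma_i$ and $\Gamma_j$, so the off-diagonal blocks vanish.

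Next I would observe that the degree of a vertex $v \in V_i$ computed in $\Gamma$ equals its degree in $\Gamma_i$, since $v$ has no neighbors outside of $\Gamma_i$. Thus the diagonal matrix $D(\Gamma)$, written in the same vertex order, is also block-diagonal with blocks $D(\Gamma_1), \ldots, D(\Gamma_k)$. Subtracting, we obtain
\[
L(\Gamma) \;=\; D(\Gamma) - A(\Gamma) \;=\; \mathrm{diag}\bigl(L(\Gamma_1),\, L(\Gamma_2),\, \ldots,\, L(\Gamma_k)\bigr).
\]

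Finally, I would invoke the standard fact from linear algebra that the determinant of a block-diagonal matrix is the product of the determinants of its diagonal blocks. Applying this to $xI - L(\Gamma)$, whose blocks are $xI - L(\Gamma_i)$, yields
\[
\Theta(\Gamma, x) \;=\; \det\bigl(xI - L(\Gamma)\bigr) \;=\; \prod_{i=1}^{k} \det\bigl(xI - L(\Gamma_i)\bigr) \;=\; \prod_{i=1}^{k} \Theta(\Gamma_i, x),
\]
which is the desired identity. There is no substantive obstacle here; the only point that requires any care is verifying that vertex degrees are preserved under disjoint union, which is immediate from the definition of disjoint union (no edges are added between components).
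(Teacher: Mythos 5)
Your proof is correct and is the standard argument for this fact; the paper itself gives no proof, simply citing Mohar's survey, and the block-diagonal decomposition of $L(\Gamma)$ together with the multiplicativity of the determinant over diagonal blocks is exactly the reasoning that reference relies on. Nothing is missing.
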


\begin{theorem}\cite{kel65}\label{d-join}
Let $\Gamma_1$ and $\Gamma_2$ be two disjoint graphs with $n_1$ and $n_2$ vertices, respectively. Then
\[\Theta(\Gamma_1 + \Gamma_2, x) = \frac{x(x-n_1-n_2)}{(x-n_1)(x-n_2)}\Theta(\Gamma_1, x-n_2)\Theta(\Gamma, x-n_1).\]
\end{theorem}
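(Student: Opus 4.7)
The plan is to exploit the block structure of the Laplacian of the join. Since every vertex of $\Gamma_i$ is adjacent to every vertex of $\Gamma_{3-i}$, the degree of each vertex of $\Gamma_i$ in the join increases by $n_{3-i}$, so
\[
L(\Gamma_1 + \Gamma_2) = \begin{pmatrix} L(\Gamma_1) + n_2 I_{n_1} & -J_{n_1,n_2} \\ -J_{n_2,n_1} & L(\Gamma_2) + n_1 I_{n_2} \end{pmatrix},
\]
where $J_{a,b}$ is the $a \times b$ all-ones matrix. I will then read off the full spectrum by identifying an orthogonal invariant decomposition of $\mathbb{R}^{n_1+n_2}$ adapted to this block form.

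First I would split $\mathbb{R}^{n_1+n_2}$ into four pieces: the one-dimensional spans of $(\mathbf{1}_{n_1},\mathbf{0})$ and $(\mathbf{0},\mathbf{1}_{n_2})$, and the orthogonal complements $\mathbf{1}_{n_1}^\perp \oplus \{\mathbf{0}\}$ and $\{\mathbf{0}\} \oplus \mathbf{1}_{n_2}^\perp$. Since each row of $J_{n_2,n_1}$ is a copy of $\mathbf{1}_{n_1}^T$, the off-diagonal block annihilates $\mathbf{1}_{n_1}^\perp$; moreover $L(\Gamma_1)$ preserves $\mathbf{1}_{n_1}^\perp$ because $\mathbf{1}_{n_1} \in \ker L(\Gamma_1)$. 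Hence any eigenvector $\mathbf{v} \perp \mathbf{1}_{n_1}$ of $L(\Gamma_1)$ with eigenvalue $\mu$ lifts to $(\mathbf{v},\mathbf{0})$, an eigenvector of $L(\Gamma_1+\Gamma_2)$ with eigenvalue $\mu + n_2$. The symmetric statement handles $\Gamma_2$, producing $n_1 - 1$ eigenvalues of the form $\mu_i + n_2$ and $n_2 - 1$ of the form $\nu_j + n_1$.

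Next I would compute the action on the remaining two-dimensional invariant subspace spanned by $(\mathbf{1}_{n_1},\mathbf{0})$ and $(\mathbf{0},\mathbf{1}_{n_2})$, which reduces $L(\Gamma_1 + \Gamma_2)$ to the $2 \times 2$ matrix $\begin{pmatrix} n_2 & -n_2 \\ -n_1 & n_1 \end{pmatrix}$; its trace and determinant give eigenvalues $0$ and $n_1 + n_2$. Combined with the previous step this exhausts the $n_1 + n_2$ eigenvalues of $L(\Gamma_1 + \Gamma_2)$.

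Finally, writing $\Theta(\Gamma_1, y) = y \prod_{i=2}^{n_1}(y - \mu_i)$ and substituting $y = x - n_2$ yields $\Theta(\Gamma_1, x-n_2)/(x-n_2) = \prod_{i=2}^{n_1}(x - n_2 - \mu_i)$, which is precisely the factor coming from the $\mathbf{1}_{n_1}^\perp$ eigenvalues. The analogous identity holds for $\Gamma_2$, and multiplying these two contributions by the factor $x(x - n_1 - n_2)$ from the two-dimensional block gives the claimed formula. The only real obstacle is the algebraic bookkeeping: the prefactor $x(x - n_1 - n_2)/[(x - n_1)(x - n_2)]$ is there precisely to cancel the spurious zero-eigenvalue factors reintroduced when one writes the two orthogonal-complement contributions as the full shifted polynomials $\Theta(\Gamma_i, x - n_{3-i})$.
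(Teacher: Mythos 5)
Your proof is correct. Note, however, that the paper does not prove this statement at all: it is quoted verbatim (as Theorem~\ref{d-join}) from Kel'mans' 1965 paper, so there is no in-paper argument to compare yours against. Your argument is the standard and complete one: the block form of $L(\Gamma_1+\Gamma_2)$ is right, the off-diagonal all-ones blocks do annihilate $\mathbf{1}_{n_i}^\perp$ (and symmetry of $L(\Gamma_i)$ guarantees these complements are invariant), the $2\times 2$ reduction on $\mathrm{span}\{(\mathbf{1}_{n_1},\mathbf{0}),(\mathbf{0},\mathbf{1}_{n_2})\}$ has trace $n_1+n_2$ and determinant $0$, and the identity $\Theta(\Gamma_i,y)=y\prod(y-\mu)$ over the eigenvalues on $\mathbf{1}_{n_i}^\perp$ holds whether or not $\Gamma_i$ is connected, which is exactly what the prefactor $x(x-n_1-n_2)/[(x-n_1)(x-n_2)]$ accounts for. (Incidentally, the factor $\Theta(\Gamma,x-n_1)$ in the paper's displayed formula is a typo for $\Theta(\Gamma_2,x-n_1)$, as your derivation makes clear.)
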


\begin{theorem}\cite{chak09}\label{comp}
Let $G$ be a finite group. The power graph $\mathcal{G}(G)$ is complete if and only if $G$ is a cyclic group of order $p^m$ for some prime number $p$ and non-negative integer $m$.
\end{theorem}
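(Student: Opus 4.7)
The plan is to prove each direction separately, with the reverse direction relying on the nesting of subgroups in cyclic $p$-groups, and the forward direction exploiting a maximum-order element together with a Cauchy-type argument.

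For the direction $(\Leftarrow)$, I assume $G$ is cyclic of order $p^m$ and pick two distinct elements $a,b \in G$. Since every subgroup of a cyclic group of prime power order is itself cyclic of prime power order, and subgroups of a cyclic group are totally ordered by inclusion when the group has prime power order, the cyclic subgroups $\langle a\rangle$ and $\langle b\rangle$ are comparable. Say $\langle a\rangle \subseteq \langle b\rangle$; then $a$ is a power of $b$, so $a$ and $b$ are adjacent in $\mathcal{G}(G)$. Hence $\mathcal{G}(G)$ is complete.

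For the direction $(\Rightarrow)$, I first show $G$ must be cyclic. Let $g\in G$ be an element of maximum order. For an arbitrary $h\in G$, completeness of $\mathcal{G}(G)$ forces either $h\in\langle g\rangle$ or $g\in\langle h\rangle$. In the second case $|g|\mid|h|$, so by maximality $|g|=|h|$, whence $\langle g\rangle=\langle h\rangle$ and again $h\in\langle g\rangle$. Therefore $G=\langle g\rangle$ is cyclic. Next, suppose for contradiction that $|G|$ has two distinct prime divisors $p$ and $q$. Since $G$ is cyclic of order divisible by $p$ and by $q$, it contains elements $a$ of order $p$ and $b$ of order $q$. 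Completeness of $\mathcal{G}(G)$ would make one a power of the other, forcing $|a|\mid|b|$ or $|b|\mid|a|$, i.e., $p\mid q$ or $q\mid p$, contradicting $p\neq q$. Hence $|G|=p^m$ for some prime $p$ and integer $m\geq 0$.

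The only delicate point is the cyclicity argument in Step 1 of the forward direction, where one has to rule out the possibility that $G$ splits into incomparable cyclic subgroups; the ``maximum order'' trick handles this cleanly, so I do not anticipate any serious obstacle. The rest of the proof is a short application of the basic divisibility properties of element orders.
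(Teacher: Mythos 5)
Your proof is correct. The paper states this result without proof, citing Chakrabarty, Ghosh, and Sen \cite{chak09}, and your argument (chain of subgroups in a cyclic $p$-group for sufficiency; a maximal-order element to force cyclicity, plus incomparable element orders for distinct primes, for necessity) is essentially the standard proof given in that reference, so there is nothing substantive to compare against within this paper.
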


\begin{theorem}\cite{mck14}\label{struct}
For any prime number $p$ and two positive integers $m$ and $n$,
\[\mathcal{G}(\mathbb{Z}_{p^m}^n) \cong K_1 + \frac{p^n-1}{p-1}\left(K_{\phi(p)}+p^{n-1}\left(K_{\phi(p^2)}+p^{n-1}(\cdots + p^{n-1}K_{\phi(p^m)})\right)\cdots\right).\]
\end{theorem}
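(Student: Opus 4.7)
The plan is to prove the formula by a structural decomposition of $\mathcal{G}(G)$ for $G=\mathbb{Z}_{p^m}^n$, read from the outside of the nested expression inward.

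\emph{Step 1: Peel off the identity.} Since $0=g^{|g|}\in\langle g\rangle$ for every $g\in G$, the vertex $0$ is adjacent to every other vertex of $\mathcal{G}(G)$, so $\mathcal{G}(G)\cong K_1+\mathcal{G}(G)[G\setminus\{0\}]$ and the leading $K_1+(\,\cdots\,)$ in the formula is accounted for.

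\emph{Step 2: Cluster the non-identity vertices by the unique order-$p$ subgroup.} For any cyclic subgroup $C\le G$ of $p$-power order let $\mathcal{C}(C):=\{g\in G\setminus\{0\}:C\subseteq\langle g\rangle\}$, viewed as an induced subgraph of $\mathcal{G}(G)$. Because every non-identity element of $G$ has $p$-power order and every cyclic $p$-group has a unique subgroup of each $p$-power order it admits, the clusters $\mathcal{C}(C)$ with $|C|=p$ partition $G\setminus\{0\}$. If $g_1\in\mathcal{C}(C_1)$ and $g_2\in\mathcal{C}(C_2)$ with $C_1\neq C_2$ were adjacent, then (without loss of generality) $\langle g_1\rangle\subseteq\langle g_2\rangle$, forcing $C_1\subseteq\langle g_2\rangle$ and hence $C_1=C_2$ by the uniqueness of the order-$p$ subgroup of $\langle g_2\rangle$, a contradiction. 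So distinct clusters span no edges of $\mathcal{G}(G)$. A standard count of elements of order $p$ yields $(p^n-1)/(p-1)$ cyclic subgroups of order $p$, which explains the outer coefficient.

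\emph{Step 3: Recursive structure of a single cluster.} For any cyclic subgroup $C\le G$ of order $p^k$ I would establish
\[\mathcal{C}(C)\cong\begin{cases}K_{\phi(p^k)}+p^{n-1}\,\mathcal{C}(C')&\text{if }k<m,\\ K_{\phi(p^m)}&\text{if }k=m,\end{cases}\]
where $C'$ denotes any cyclic subgroup of order $p^{k+1}$ containing $C$. When $k=m$ the elements of $\mathcal{C}(C)$ are precisely the $\phi(p^m)$ generators of $C$, which induce a clique by Theorem~\ref{comp}. When $k<m$, the elements of $\mathcal{C}(C)$ of order exactly $p^k$ are the $\phi(p^k)$ generators of $C$, and each such generator is a power of every $g$ with $\langle g\rangle\supseteq C$, so the resulting $K_{\phi(p^k)}$ is joined to every higher-order vertex of $\mathcal{C}(C)$. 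The higher-order vertices split according to the unique cyclic subgroup of order $p^{k+1}$ contained in $\langle g\rangle$ (which automatically contains $C$), and different $C'$-pieces span no edges by the Step~2 argument applied one level up. The transitive action of $\mathrm{Aut}(G)=GL_n(\mathbb{Z}_{p^m})$ on cyclic subgroups of each fixed order---an order-$p^j$ element is $p^{m-j}$ times an order-$p^m$ element, and order-$p^m$ elements extend to $\mathbb{Z}_{p^m}$-bases---identifies the resulting pieces as $p^{n-1}$ isomorphic copies of $\mathcal{C}(C')$. Unrolling this identity from $k=1$ up to $k=m$ inside each cluster of Step~2 produces the nested join/union expression.

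\emph{Main obstacle.} The combinatorial heart of Step~3 is the uniform count: for each $k<m$, any fixed cyclic subgroup of order $p^k$ is contained in exactly $p^{n-1}$ cyclic subgroups of order $p^{k+1}$. I would derive it by computing the total number of cyclic subgroups of order $p^j$ in $G$ as $p^{(j-1)(n-1)}(p^n-1)/(p-1)$ (divide the $p^{(j-1)n}(p^n-1)$ elements of order $p^j$ by $\phi(p^j)$), taking the ratio at consecutive values of $j$, and invoking $\mathrm{Aut}$-transitivity to conclude the count is the same for every $C$. Once this count is in hand the remainder of the argument reduces to routine bookkeeping about joins and disjoint unions.
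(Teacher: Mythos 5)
The paper does not prove Theorem~\ref{struct}; it is quoted from McKemmie's project \cite{mck14} and used as a black box, so there is no internal proof to compare yours against. Judged on its own, your argument is correct and complete in outline. Step 1 (the identity is a power of everything, giving the outer $K_1+{}$) and Step 2 (clustering $G\setminus\{0\}$ by the unique order-$p$ subgroup of $\langle g\rangle$, with the uniqueness of that subgroup ruling out cross-cluster edges) are exactly the right reductions, and the recursion in Step 3 correctly captures the nested join/union: generators of $C$ form a clique joined to everything above them, and the vertices above split by the unique order-$p^{k+1}$ subgroup of $\langle g\rangle$. Your key count is also right: there are $p^{(j-1)(n-1)}(p^n-1)/(p-1)$ cyclic subgroups of order $p^j$, each containing a unique cyclic subgroup of order $p^{j-1}$, so double counting plus $\mathrm{Aut}(G)$-transitivity gives exactly $p^{n-1}$ subgroups of order $p^{k+1}$ over each $C$ of order $p^k$. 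The only point where a referee would ask you to write a line more is the transitivity claim itself: you should state explicitly that group automorphisms induce power-graph automorphisms and that this is what makes the $p^{n-1}$ pieces $\mathcal{C}(C')$ (and the $(p^n-1)/(p-1)$ top-level clusters) isomorphic \emph{as graphs}, not merely equinumerous; your parenthetical about order-$p^j$ elements being $p^{m-j}$ times order-$p^m$ elements, which extend to bases, does supply the needed transitivity. With that spelled out, the proof stands.
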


\section{Main Results}

In \cite{bapt14}, a flower graph is a block graph with only one cut vertex. We know that every non-trivial cyclic group has at least one generator. It is easy to see that the power graph $\mathcal{G}(\mathbb{Z}_m)$  of the cyclic group $\mathbb{Z}_m$ is not a flower graph. By Theorem \ref{struct}, we immediately observe the following.

\begin{theorem}
The power graph $\mathcal{G}(\mathbb{Z}_{p^m}^n)$ is a flower graph if and only if $n\ge 2$ and $m = 1$.
\end{theorem}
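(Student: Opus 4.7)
The plan is to apply Theorem \ref{struct} in both directions.

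For sufficiency, suppose $n \ge 2$ and $m = 1$. Theorem \ref{struct} collapses to $\mathcal{G}(\mathbb{Z}_p^n) \cong K_1 + \frac{p^n-1}{p-1}\,K_{p-1}$, i.e., the identity vertex joined to $\frac{p^n-1}{p-1} = 1+p+\cdots+p^{n-1} \ge 2$ disjoint copies of $K_{p-1}$. Each such copy together with the identity forms a clique $K_p$, and these cliques pairwise meet only at the identity. Hence every block is complete and the identity is the unique cut vertex, so the graph is a flower graph.

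For necessity, I will rule out the remaining cases. If $n = 1$, Theorem \ref{comp} gives $\mathcal{G}(\mathbb{Z}_{p^m}) \cong K_{p^m}$, which has no cut vertex and so is not a flower graph. If $n \ge 2$ and $m \ge 2$, I will produce an induced diamond (i.e., $K_4$ minus an edge) inside $\mathcal{G}(\mathbb{Z}_{p^m}^n)$; because every block of a block graph is a clique and the two non-adjacent vertices of a diamond lie in a common block, no block graph can contain an induced diamond. To exhibit one via Theorem \ref{struct}, note that within a single outer petal of the structure we see $K_{\phi(p)} + p^{n-1}(K_{\phi(p^2)} + \cdots)$ with $p^{n-1} \ge 2$. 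Let $e$ denote the outer $K_1$ vertex, choose $v$ from $K_{\phi(p)}$ (possible since $\phi(p) \ge 1$), and pick $u_1, u_2$ from two distinct copies of the inner summand $K_{\phi(p^2)} + \cdots$. The outer join gives $e \sim v$, $e \sim u_1$, $e \sim u_2$; the inner join gives $v \sim u_1$, $v \sim u_2$; the $p^{n-1}$-fold disjoint union prevents $u_1 \sim u_2$. Thus $\{e, v, u_1, u_2\}$ induces a diamond.

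The main obstacle is verifying the adjacencies and non-adjacencies through the nested joins and disjoint unions of Theorem \ref{struct}, in particular confirming that $v$ is joined to both $u_1$ and $u_2$ while $u_1$ and $u_2$ sit in different components of the $p^{n-1}$-fold disjoint union. Once these are read off correctly, the diamond argument immediately disposes of the block-graph property and finishes the proof.
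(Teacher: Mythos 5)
Your argument is correct and follows the same route the paper takes: the paper offers no written proof beyond ``By Theorem \ref{struct}, we immediately observe,'' and your proposal simply makes that observation explicit --- the petal/clique description for $m=1$, completeness (hence no cut vertex) for $n=1$ via Theorem \ref{comp}, and the induced diamond ruling out the block-graph property when $m\ge 2$ and $n\ge 2$. All the adjacency claims you flag as needing verification do check out: $v\sim u_1$, $v\sim u_2$ come from the join $K_{\phi(p)}+p^{n-1}(\cdots)$, and $u_1\not\sim u_2$ from their lying in distinct components of the $p^{n-1}$-fold disjoint union, which exists since $p^{n-1}\ge 2$ and $\phi(p^2)\ge 2$.
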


One of the original motives of this paper is to determine the Laplacian spectrum of planar power graphs of finite abelian groups. In \cite{chel13}, it is proved that the power graph $\mathcal{G}(G)$ of a finite abelian group $G$ is planar if and only if $G$ is isomorphic to one of the following abelian groups: $\mathbb{Z}_2^n$, $\mathbb{Z}_3^n$, $\mathbb{Z}_4^n$, $\mathbb{Z}_2^r \times \mathbb{Z}_4^s$.  In this context we find the Laplacian spectrum of the power graphs $\mathcal{G}(\mathbb{Z}_{p^m}^n)$ and $\mathcal{G}(\mathbb{Z}_2^r \times \mathbb{Z}_{4}^s)$ in the following two subsections.

\subsection{Laplacian spectrum of power graph $\mathcal{G}(\mathbb{Z}_{p^m}^n)$}
In this subsection we determine the Laplacian polynomial of the power graph $\mathcal{G}(\mathbb{Z}_{p^m}^n)$ and observe that the spectrum of group $\mathbb{Z}_{p^m}^n$ is contained in the Laplacian spectrum of graph  $\mathcal{G}(\mathbb{Z}_{p^m}^n)$. By using  Theorem \ref{d-union} and Theorem \ref{d-join}, we have the following.

\begin{lemma}\label{1-phi}
For each integer $i\;(1\le i \le m-1)$, the Laplacian polynomial $\Theta \left(K_{\phi(p^i)}+p^{n-1}\left(K_{\phi(p^{i+1})}+p^{n-1}(\cdots + p^{n-1}K_{\phi(p^m)})\right)\cdots\right), x-p^{i-1})$ is
\[ (x-p^{i-1})\left(x-p^i-p^{n-1}\left(\phi(p^{i+1})+p^{n-1}(\cdots + p^{n-1}\phi(p^m))\cdots\right)\right)^{\phi(p^i)}\]
\[\frac{\{\Theta \left(K_{\phi(p^{i+1})}+p^{n-1}\left(K_{\phi(p^{i+2})}+p^{n-1}(\cdots + p^{n-1}K_{\phi(p^m)})\right)\cdots\right), x-p^i)\}^{p^{n-1}}}{(x-p^i)}.\]
\end{lemma}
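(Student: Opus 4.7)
The plan is to apply Theorem \ref{d-join} at the outermost join and then reduce the disjoint-copies factor via Theorem \ref{d-union}. Write the graph in question as $H_i = K_{\phi(p^i)} + \Gamma$, where $\Gamma$ denotes the disjoint union $p^{n-1} H_{i+1}$ and
\[
H_{i+1} = K_{\phi(p^{i+1})} + p^{n-1}\bigl(\cdots + p^{n-1} K_{\phi(p^m)}\bigr)\cdots
\]
is the ``next level'' graph whose Laplacian polynomial appears on the right-hand side of the statement. Let $n_1 = \phi(p^i)$ and $n_2 = p^{n-1}|H_{i+1}|$ be the vertex counts of $K_{\phi(p^i)}$ and $\Gamma$; unwinding the recursion $|H_{i+1}| = \phi(p^{i+1}) + p^{n-1}|H_{i+2}|$ shows that $n_2$ is precisely the expression that sits inside the large parenthesized quantity raised to $\phi(p^i)$ in the statement.

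Apply Theorem \ref{d-join} to obtain
\[
\Theta(H_i, y) = \frac{y(y-n_1-n_2)}{(y-n_1)(y-n_2)}\,\Theta(K_{\phi(p^i)}, y-n_2)\,\Theta(\Gamma, y-n_1),
\]
then use Theorem \ref{d-union} to replace $\Theta(\Gamma, z)$ with $[\Theta(H_{i+1}, z)]^{p^{n-1}}$ and the standard formula $\Theta(K_r, z) = z(z-r)^{r-1}$ to rewrite $\Theta(K_{\phi(p^i)}, y-n_2) = (y-n_2)(y-n_2-n_1)^{n_1-1}$. The $(y-n_2)$ factor cancels the corresponding term in the denominator coming from Theorem \ref{d-join}, leaving
\[
\Theta(H_i, y) = \frac{y\,(y-n_1-n_2)^{n_1}}{y-n_1}\,[\Theta(H_{i+1}, y-n_1)]^{p^{n-1}}.
\]

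The last step is the substitution $y = x - p^{i-1}$. The single arithmetic observation needed is $p^{i-1} + \phi(p^i) = p^i$ (equivalently, $\phi(p^i) = p^i - p^{i-1}$), which ensures that $y - n_1$ becomes $x - p^i$, matching exactly the shifted argument of the inner Laplacian polynomial on the right-hand side; the remaining pieces align term-by-term, since $y \mapsto x - p^{i-1}$ gives the leading factor and $y - n_1 - n_2 \mapsto x - p^i - p^{n-1}(\phi(p^{i+1}) + \cdots)$ gives the base of the $\phi(p^i)$-th power. I anticipate no conceptual obstacle: the lemma is a direct unfolding of the two known Laplacian identities combined with one line of prime-power arithmetic. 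The only care required is in the cancellation of the $(y-n_2)$ factor and in matching the deeply nested expression for $n_2$ with the notation used in the statement.
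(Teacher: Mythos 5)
Your proposal is correct and is exactly the argument the paper intends: the paper gives no explicit proof of Lemma \ref{1-phi}, merely citing Theorems \ref{d-union} and \ref{d-join}, and your computation (outer join, replacing the $p^{n-1}$ disjoint copies via the union theorem, $\Theta(K_r,z)=z(z-r)^{r-1}$, and the identity $\phi(p^i)=p^i-p^{i-1}$ under the shift $y=x-p^{i-1}$) fills in precisely those steps with the correct cancellation of the $(y-n_2)$ factor.
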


The next theorem gives the Laplacian polynomial of the power graph $\mathcal{G}(\mathbb{Z}_{p^m}^n)$.

\begin{theorem} \label{1-main}
The Laplacian polynomial of the power graph $\mathcal{G}(\mathbb{Z}_{p^m}^n)$ is
\begin{equation*}
\begin{split}
&\quad x(x-p^{mn})(x-1)^{l-1} (x-p^m)^{lp^{(m-1)(n-1)}(\phi(p^m)-1)} \left(\prod_{i=1}^{m-1}(x-p^i)^{lp^{(i-1)(n-1)}(p^{n-1}-1)}\right) \\
&\quad \left(\prod_{i=1}^{m-1}\left(x-p^i-p^{n-1}\left(\phi(p^{i+1})+p^{n-1}(\cdots + p^{n-1}\phi(p^m))\cdots\right)\right)^{lp^{(i-1)(n-1)}\phi(p^i)}\right),
\end{split}
\end{equation*}
where $l = \frac{p^n -1}{p-1}$.
\end{theorem}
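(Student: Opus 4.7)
The plan is to combine the structural isomorphism in Theorem \ref{struct} with the general tools (Theorems \ref{d-union}, \ref{d-join}, and Lemma \ref{1-phi}) by unfolding the nested join/union from the outside inward. I would set $l = (p^n-1)/(p-1)$, define $H_m = K_{\phi(p^m)}$, and recursively $H_i = K_{\phi(p^i)} + p^{n-1}H_{i+1}$ for $1 \le i \le m-1$; Theorem \ref{struct} then identifies $\mathcal{G}(\mathbb{Z}_{p^m}^n) \cong K_1 + l H_1$, while a direct vertex count yields $|l H_1| = p^{mn} - 1$.

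First I would apply Theorem \ref{d-join} at the outermost join with $\Gamma_1 = K_1$ and $\Gamma_2 = l H_1$. Using $\Theta(K_1, y) = y$ and $n_1 + n_2 = p^{mn}$, the join formula collapses to
\[
\Theta(\mathcal{G}(\mathbb{Z}_{p^m}^n), x) = \frac{x(x - p^{mn})}{x - 1}\, \Theta(l H_1, x - 1),
\]
and Theorem \ref{d-union} rewrites the right-hand factor as $[\Theta(H_1, x - 1)]^l$. Next I would iterate Lemma \ref{1-phi} for $i = 1, 2, \ldots, m-1$, at each stage substituting the expression produced at stage $i+1$ into the $[\Theta(H_{i+1}, x - p^i)]^{p^{n-1}}$ slot at stage $i$. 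The base case $i = m$ is handled directly: $\Theta(H_m, x - p^{m-1}) = \Theta(K_{\phi(p^m)}, x - p^{m-1}) = (x - p^{m-1})(x - p^m)^{\phi(p^m) - 1}$, using the standard identity $\Theta(K_s, y) = y(y - s)^{s-1}$ together with $p^{m-1} + \phi(p^m) = p^m$.

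The remaining work is bookkeeping of exponents. Because every descent through Lemma \ref{1-phi} raises the sub-polynomial to the $p^{n-1}$ power, the cumulative exponent carried down to stage $i$ is $l \cdot p^{(i-1)(n-1)}$. This should account for every exponent in the statement: the factor $(x - p^i - \cdots)^{\phi(p^i)}$ of Lemma \ref{1-phi} acquires the exponent $lp^{(i-1)(n-1)}\phi(p^i)$; the numerator factor $(x - p^i)$ emerging from expanding $\Theta(H_{i+1}, x - p^i)$ at stage $i+1$ (with accumulated exponent $lp^{i(n-1)}$) combines with the denominator factor $(x - p^i)$ left by stage $i$ (with accumulated exponent $lp^{(i-1)(n-1)}$) to give the net exponent $lp^{(i-1)(n-1)}(p^{n-1} - 1)$ on $(x - p^i)$ for $1 \le i \le m-1$; the base-case factor $(x - p^m)^{\phi(p^m) - 1}$ acquires the exponent $lp^{(m-1)(n-1)}(\phi(p^m) - 1)$; and the $(x - 1)^l$ from the outer union combines with the $1/(x-1)$ from the initial join to yield $(x - 1)^{l-1}$.

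The main obstacle is combinatorial rather than conceptual: keeping track of this cascade of exponents and cancellations through $m$ levels of nesting without losing a factor. I would organize the argument cleanly by induction on $m$ (equivalently, structural induction on the tower of $H_i$'s) and would verify that the total degree equals $p^{mn}$ at the end as a consistency check.
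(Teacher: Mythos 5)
Your proposal is correct and follows essentially the same route as the paper: apply Theorem \ref{d-join} to the outer join with $K_1$, then repeatedly unfold the nested structure via Theorem \ref{d-union} and Lemma \ref{1-phi}, finishing with $\Theta(K_{\phi(p^m)}, x-p^{m-1}) = (x-p^{m-1})(x-p^m)^{\phi(p^m)-1}$ and collecting exponents exactly as you describe. The paper writes out the same cascade explicitly step by step rather than packaging it as an induction on the tower of nested joins, but the decomposition, the cancellations, and the exponent bookkeeping are identical to yours.
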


\begin{proof}
By using Theorem \ref{d-union} and Theorem \ref{d-join}, we get the Laplacian polynomial

\begin{equation*}
\begin{split}
\Theta(\mathcal{G}(\mathbb{Z}_{p^m}^n), x) & = x(x-p^{mn})\\
&\quad \frac{\{\Theta \left(K_{\phi(p)}+p^{n-1}\left(K_{\phi(p^{2})}+p^{n-1}(\cdots + p^{n-1}K_{\phi(p^m)})\right)\cdots\right), x-1)\}^l}{(x-1)}
\end{split}
\end{equation*}
Using Theorem \ref{d-union}, Theorem \ref{d-join}, and Lemma \ref{1-phi}, the above polynomial will be

\begin{equation*}
\begin{split}
&=x(x-p^{mn})(x-1)^{l-1}\left(x-p-p^{n-1}\left(\phi(p^{2})+p^{n-1}(\cdots + p^{n-1}\phi(p^m))\cdots\right)\right)^{l\phi(p)}\\
&\quad \frac{\{\Theta \left(K_{\phi(p^{2})}+p^{n-1}\left(K_{\phi(p^{3})}+p^{n-1}(\cdots + p^{n-1}K_{\phi(p^m)})\right)\cdots\right), x-p)\}^{lp^{n-1}}}{(x-p)^l}
\end{split}
\end{equation*}

Using Theorem \ref{d-union}, Theorem \ref{d-join}, and Lemma \ref{1-phi}, the above polynomial will be

\begin{equation*}
\begin{split}
&=x(x-p^{mn})(x-1)^{l-1}\left(x-p-p^{n-1}\left(\phi(p^{2})+p^{n-1}(\cdots + p^{n-1}\phi(p^m))\cdots\right)\right)^{l\phi(p)}\\
&\quad (x-p)^{l(p^{n-1}-1)} \left(x-p^2-p^{n-1}\left(\phi(p^{3})+p^{n-1}(\cdots + p^{n-1}\phi(p^m))\cdots\right)\right)^{lp^{n-1}\phi(p^2)}\\
&\quad \frac{\{\Theta \left(K_{\phi(p^{3})}+p^{n-1}\left(K_{\phi(p^{4})}+p^{n-1}(\cdots + p^{n-1}K_{\phi(p^m)})\right)\cdots\right), x-p^2)\}^{lp^{2(n-1)}}}{(x-p^2)^{lp^{n-1}}}
\end{split}
\end{equation*}

Using Theorem \ref{d-union}, Theorem \ref{d-join}, and Lemma \ref{1-phi}, the above polynomial will be

\begin{equation*}
\begin{split}
&=x(x-p^{mn})(x-1)^{l-1}\left(x-p-p^{n-1}\left(\phi(p^{2})+p^{n-1}(\cdots + p^{n-1}\phi(p^m))\cdots\right)\right)^{l\phi(p)}\\
&\quad (x-p)^{l(p^{n-1}-1)} \left(x-p^2-p^{n-1}\left(\phi(p^{3})+p^{n-1}(\cdots + p^{n-1}\phi(p^m))\cdots\right)\right)^{lp^{n-1}\phi(p^2)}\\
&\quad (x-p^2)^{lp^{n-1}(p^{n-1}-1)} \left(x-p^3-p^{n-1}\left(\phi(p^{4})+p^{n-1}(\cdots + p^{n-1}\phi(p^m))\cdots\right)\right)^{lp^{2(n-1)}\phi(p^3)}\\
&\quad \frac{\{\Theta \left(K_{\phi(p^{4})}+p^{n-1}\left(K_{\phi(p^{5})}+p^{n-1}(\cdots + p^{n-1}K_{\phi(p^m)})\right)\cdots\right), x-p^3)\}^{lp^{3(n-1)}}}{(x-p^3)^{lp^{2(n-1)}}}
\end{split}
\end{equation*}
Continuing this process up to $(m-1)^{\mbox{th}}$ step, we get
\begin{equation*}
\begin{split}
&=x(x-p^{mn})(x-1)^{l-1}\left(x-p-p^{n-1}\left(\phi(p^{2})+p^{n-1}(\cdots + p^{n-1}\phi(p^m))\cdots\right)\right)^{l\phi(p)}\\
&\quad (x-p)^{l(p^{n-1}-1)} \left(x-p^2-p^{n-1}\left(\phi(p^{3})+p^{n-1}(\cdots + p^{n-1}\phi(p^m))\cdots\right)\right)^{lp^{n-1}\phi(p^2)}\\
&\quad (x-p^2)^{lp^{n-1}(p^{n-1}-1)} \left(x-p^3-p^{n-1}\left(\phi(p^{4})+p^{n-1}(\cdots + p^{n-1}\phi(p^m))\cdots\right)\right)^{lp^{2(n-1)}\phi(p^3)}\\
&\quad \cdots (x-p^{m-2})^{lp^{(m-1)(n-1)}(p^{n-1}-1)} (x-p^{m-1}-p^{n-1}\phi(p^m))^{lp^{(m-2)(n-1)}\phi(p^{m-1})}\\
&\quad \frac{\{\Theta(K_{\phi(p^{m})}, x-p^{m-1})\}^{lp^{(m-1)(n-1)}}}{(x-p^{m-1})^{lp^{(m-2)(n-1)}}}
\end{split}
\end{equation*}

\begin{equation*}
\begin{split}
&=x(x-p^{mn})(x-1)^{l-1}\left(x-p-p^{n-1}\left(\phi(p^{2})+p^{n-1}(\cdots + p^{n-1}\phi(p^m))\cdots\right)\right)^{l\phi(p)}\\
&\quad (x-p)^{l(p^{n-1}-1)} \left(x-p^2-p^{n-1}\left(\phi(p^{3})+p^{n-1}(\cdots + p^{n-1}\phi(p^m))\cdots\right)\right)^{lp^{n-1}\phi(p^2)}\\
&\quad (x-p^2)^{lp^{n-1}(p^{n-1}-1)} \left(x-p^3-p^{n-1}\left(\phi(p^{4})+p^{n-1}(\cdots + p^{n-1}\phi(p^m))\cdots\right)\right)^{lp^{2(n-1)}\phi(p^3)}\\
&\quad \frac{\{(x-p^{m-1})(x-p^m)^{\phi(p^m)-1}\}^{lp^{(m-1)(n-1)}}}{(x-p^{m-1})^{lp^{(m-2)(n-1)}}}
\end{split}
\end{equation*}
\begin{equation*}
\begin{split}
&= x(x-p^{mn})(x-1)^{l-1} (x-p^m)^{lp^{(m-1)(n-1)}(\phi(p^m)-1)} \left(\prod_{i=1}^{m-1}(x-p^i)^{lp^{(i-1)(n-1)}(p^{n-1}-1)}\right) \\
&\quad \left(\prod_{i=1}^{m-1}\left(x-p^i-p^{n-1}\left(\phi(p^{i+1})+p^{n-1}(\cdots + p^{n-1}\phi(p^m))\cdots\right)\right)^{lp^{(i-1)(n-1)}\phi(p^i)}\right),
\end{split}
\end{equation*}
as desired. This completes the proof.
\end{proof}

For a finite abelian group $G$, the set $\omega(G)$ consists of all positive divisors of $|G|$. Therefore, we have the following straightforward corollary of the Theorem \ref{1-main}.

\begin{corollary}\label{sp-gr-gra}
For integer $n > 1$, we have $\omega(\mathbb{Z}_{p^m}^n) \subseteq \mbox{L-spec}(\mathcal{G}(\mathbb{Z}_{p^m}^n))$.
\end{corollary}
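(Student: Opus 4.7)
The plan is straightforward: once Theorem \ref{1-main} is in hand, the corollary follows by matching each element order of $\mathbb{Z}_{p^m}^n$ with an explicit factor appearing in the displayed Laplacian polynomial.

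First I would pin down the spectrum of the group. Because $\mathbb{Z}_{p^m}^n$ is an abelian $p$-group of exponent $p^m$, the order of every element divides $p^m$, and conversely each divisor $p^i$ with $0 \leq i \leq m$ is attained (for example, by the element $(p^{m-i},0,\ldots,0)$). Thus $\omega(\mathbb{Z}_{p^m}^n) = \{1,p,p^2,\ldots,p^m\}$, and I am reduced to exhibiting each $p^i$ as a root of $\Theta(\mathcal{G}(\mathbb{Z}_{p^m}^n), x)$.

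Next I would identify each $p^i$ with a factor of the polynomial given by Theorem \ref{1-main}, verifying in each case that the exponent of that factor is strictly positive under the hypothesis $n > 1$. The eigenvalue $1 = p^0$ is supplied by the factor $(x-1)^{l-1}$; the hypothesis $n > 1$ gives $l = (p^n-1)/(p-1) \geq p+1 \geq 3$, so $l - 1 \geq 2$. For $1 \leq i \leq m-1$, the eigenvalue $p^i$ appears via the sub-factor $(x-p^i)^{l p^{(i-1)(n-1)}(p^{n-1}-1)}$ from the first product; its exponent is positive because $n > 1$ forces $p^{n-1}-1 \geq 1$. Finally, $p^m$ appears through the factor $(x-p^m)^{l p^{(m-1)(n-1)}(\phi(p^m)-1)}$, whose exponent is positive whenever $\phi(p^m) \geq 2$.

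The main, and rather minor, obstacle is just the bookkeeping of these multiplicities: one must check that under $n > 1$ every relevant exponent is strictly positive, which reduces to the elementary estimates above on $l$, powers of $p$, and the values of Euler's totient at $p^m$. No new graph-theoretic input is required beyond Theorem \ref{1-main}.
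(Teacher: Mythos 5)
Your overall strategy --- compute $\omega(\mathbb{Z}_{p^m}^n) = \{1, p, \ldots, p^m\}$ and then locate each $p^i$ as a root of an explicit factor of the polynomial in Theorem \ref{1-main} whose exponent is positive --- is exactly the intended reading of the paper's one-line derivation, and you are more careful than the paper itself (whose preceding assertion that $\omega(G)$ consists of \emph{all} positive divisors of $|G|$ is false for non-cyclic abelian $G$). Your identifications for $p^0 = 1$ and for $p^i$ with $1 \le i \le m-1$ are correct: under $n>1$ one has $l - 1 \ge 2$ and $p^{n-1} - 1 \ge 1$, so those factors genuinely occur.

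However, there is a genuine gap at the last step, and it is not mere bookkeeping. You observe that the exponent $l\,p^{(m-1)(n-1)}(\phi(p^m)-1)$ of $(x-p^m)$ is positive ``whenever $\phi(p^m) \ge 2$,'' but you never dispose of the remaining case $\phi(p^m) = 1$, i.e.\ $p = 2$ and $m = 1$. There the exponent is $0$, the factor disappears, and no other factor supplies the root $p^m = 2$: for $m=1$ the whole polynomial collapses to $x\,(x-2^n)(x-1)^{2^n-2}$. Indeed the statement itself fails in that case: $\mathcal{G}(\mathbb{Z}_2^n) \cong K_1 + (2^n-1)K_1$ is a star, its Laplacian spectrum is $\{0^1, 1^{2^n-2}, (2^n)^1\}$, and $2 \in \omega(\mathbb{Z}_2^n)$ is not among these eigenvalues when $n>1$. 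So the corollary as stated is false for $(p,m)=(2,1)$, and your proof cannot be completed without adding the hypothesis $\phi(p^m) \ge 2$ (equivalently $(p,m) \neq (2,1)$), under which your argument does go through. You should make this exclusion explicit rather than leaving it as an unexamined ``whenever''; as written, the step for $p^m$ silently fails exactly where the claim breaks down.
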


If $n = 1$, the Corollary \ref{sp-gr-gra} is not necessarily true as shown in the following Example \ref{not-spec}.

\begin{example}\label{not-spec}
Consider the finite abelian group $\mathbb{Z}_8$. By Theorem \ref{comp}, the power graph $\mathcal{G}(\mathbb{Z}_8)$ is isomorphic to $K_8$. It is well-known that $\mbox{L-spec}(K_n) = \{0^1, n^{n-1}\}$, therefore we have $\mbox{L-spec}(\mathcal{G}(\mathbb{Z}_8)) = \{0^1, 8^{7}\}$. Since $\mathbb{Z}_8$ is abelian, therefore $\omega(\mathbb{Z}_8) = \{1, 2, 4, 8\}$ and consequently $\omega(\mathbb{Z}_8)\nsubseteq\mbox{L-spec}(\mathcal{G}(\mathbb{Z}_8)$.
\end{example}

The following corollary is a straightforward consequence of the Theorem \ref{1-main}.
\begin{corollary}\

\begin{enumerate}
\item[\rm(i)] The total number of distinct Laplacian eigenvalues of the power graph $\mathcal{G}(\mathbb{Z}_{p^m}^n)$ is $2(m+1)$.
\item[\rm(ii)] The power graph $\mathcal{G}(\mathbb{Z}_{p^m}^n)$ is Laplacian integral.
\end{enumerate}
\end{corollary}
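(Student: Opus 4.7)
The plan is to read the corollary directly off the factorization supplied by Theorem \ref{1-main}. That theorem already writes $\Theta(\mathcal{G}(\mathbb{Z}_{p^m}^n), x)$ as a product of integer linear factors, so the Laplacian eigenvalues are precisely the $2(m+1)$ values
\[
0,\; p^{mn},\; 1,\; p,\; p^2,\; \ldots,\; p^m,\; \mu_1,\; \mu_2,\; \ldots,\; \mu_{m-1},
\]
where $\mu_i := p^i + p^{n-1}\bigl(\phi(p^{i+1}) + p^{n-1}(\cdots + p^{n-1}\phi(p^m))\cdots\bigr)$. Part (ii) is immediate: each such root is a polynomial in $p$ with non-negative integer coefficients (using $\phi(p^k) = p^{k-1}(p-1)$), hence a non-negative integer, so $\mathcal{G}(\mathbb{Z}_{p^m}^n)$ is Laplacian integral.

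For part (i) I would work under the implicit assumption $n \geq 2$ (for $n = 1$ the power graph is $K_{p^m}$, which has only $2$ distinct Laplacian eigenvalues, in line with Example \ref{not-spec}). It remains to verify pairwise distinctness of the list above. The powers $0 < 1 < p < p^2 < \cdots < p^m < p^{mn}$ form a strictly increasing chain since $mn > m$. To separate the $\mu_i$ from each other and from these powers, I would unfold the nested expression to
\[
\mu_i = p^i + (p-1)\sum_{k=1}^{m-i} p^{\,i+kn-1}
\]
and compute $p$-adic valuations: the leading term $p^i$ contributes $v_p = i$, while every term in the summation has valuation $i + kn - 1 \geq i + n - 1 > i$, so $v_p(\mu_i) = i$ exactly. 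Consequently $\mu_i \neq \mu_j$ whenever $i \neq j$, and since $\mu_i > p^i$ strictly, $\mu_i$ differs from every pure power $p^k$ (including $0$ and $p^{mn}$, whose $p$-adic valuations are $\infty$ and $mn$). The total count of distinct eigenvalues is therefore exactly $2(m+1)$.

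The only substantive step beyond reading off the factorization is this $p$-adic bookkeeping for the $\mu_i$; I do not anticipate a further obstacle, as the expansion of $\mu_i$ is a straightforward rearrangement using $\phi(p^k) = (p-1)p^{k-1}$ and a change of summation index.
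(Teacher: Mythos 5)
Your reading of Theorem \ref{1-main} and your $p$-adic valuation argument for distinctness are correct as far as they go: unfolding $\mu_i = p^i + (p-1)\sum_{k=1}^{m-i} p^{\,i+kn-1}$ and observing that the $p$-adic valuation of $\mu_i$ equals $i$ (valid once $n \ge 2$) does separate the $\mu_i$ from one another and from the pure powers $1, p, \ldots, p^m, p^{mn}$. This is considerably more care than the paper itself supplies, since it states the corollary as an unexplained ``straightforward consequence'' of Theorem \ref{1-main}. Your restriction to $n \ge 2$ is also the right call and matches the hypothesis of Corollary \ref{sp-gr-gra}.

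There is, however, a gap in the step ``the Laplacian eigenvalues are precisely the $2(m+1)$ values listed'': reading a root off a factorization requires the corresponding exponent to be positive, and you never check this. Most exponents are visibly positive for $n \ge 2$ (namely $l-1>0$, $p^{n-1}-1>0$, $\phi(p^i)\ge 1$), but the factor $(x-p^m)$ carries exponent $lp^{(m-1)(n-1)}(\phi(p^m)-1)$, which vanishes exactly when $\phi(p^m)=1$, i.e.\ when $p=2$ and $m=1$. In that case $\mathcal{G}(\mathbb{Z}_2^n)$ is the star $K_1 + (2^n-1)K_1$, whose Laplacian spectrum is $\{0^1, 1^{2^n-2}, (2^n)^1\}$: only $3$ distinct eigenvalues rather than $2(m+1)=4$. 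So part (i) as stated fails for $(p,m)=(2,1)$, and a complete argument must either exclude this case or explicitly verify the positivity of every exponent in the factorization, which holds precisely when $n\ge 2$ and $(p,m)\neq(2,1)$. Part (ii) is unaffected by this issue.
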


\subsection{Laplacian spectrum of power graph $\mathcal{G}(\mathbb{Z}_2^r \times \mathbb{Z}_{4}^s)$}

In this subsection we determine the Laplacian polynomial of the power graph $\mathcal{G}(\mathbb{Z}_2^r \times \mathbb{Z}_{4}^s)$. Furthermore, for a finite abelian group $H$ whose power graph $\mathcal{G}(H)$ is planar, we observe that the spectrum of group $H$ is contained in the Laplacian spectrum of graph $\mathcal{G}(H)$. Throughout this subsection, we use $G$ to denote the finite abelian group $\mathbb{Z}_2^r \times \mathbb{Z}_{4}^s$. Clearly $|G| = 2^{r+2s}$.
Before computing the Laplacian spectrum of the power graph $\mathcal{G}(G)$, we prove some necessary results.

\begin{theorem}\label{num-2-ord}
The total number of elements of order two and order four in group $G$ are $2^{r+s}-1$ and $2^{r+s}(2^{s}-1)$, respectively.
\end{theorem}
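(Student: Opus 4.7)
The plan is a direct counting argument, exploiting the fact that every element of $G = \mathbb{Z}_2^r \times \mathbb{Z}_4^s$ has order dividing $4$, so the only possible orders are $1$, $2$, or $4$. I would proceed by counting the elements of order dividing $2$ explicitly, and then obtain the two desired counts by subtraction.

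First I would observe that an element $(a_1,\ldots,a_r,b_1,\ldots,b_s) \in \mathbb{Z}_2^r \times \mathbb{Z}_4^s$ has order dividing $2$ if and only if each coordinate has order dividing $2$. In $\mathbb{Z}_2$ both elements satisfy this, giving $2$ choices per $\mathbb{Z}_2$-coordinate; in $\mathbb{Z}_4$ exactly the elements $0$ and $2$ have order dividing $2$, again giving $2$ choices per $\mathbb{Z}_4$-coordinate. Hence the number of elements of order dividing $2$ is $2^r \cdot 2^s = 2^{r+s}$.

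Removing the identity from this count immediately yields $2^{r+s}-1$ elements of order exactly $2$. For elements of order $4$, I would use $|G| = 2^{r+2s}$ and the fact that an element has order $4$ precisely when it is neither the identity nor an element of order $2$, i.e., when its order does not divide $2$. Subtracting gives
\[
2^{r+2s} - 2^{r+s} = 2^{r+s}(2^s - 1),
\]
as claimed.

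There is essentially no obstacle here: the statement reduces to counting product tuples whose coordinates satisfy order-dividing-$2$ conditions, and the only thing to be careful about is the identification of which elements of $\mathbb{Z}_4$ have order dividing $2$ (namely $0$ and $2$, not $1$ or $3$). The whole argument fits in a few lines once the order-of-a-tuple-equals-lcm-of-orders principle is invoked.
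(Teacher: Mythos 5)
Your proposal is correct and follows essentially the same route as the paper: count the $2^{r+s}$ tuples whose coordinates all have order dividing $2$ (two choices per coordinate, namely all of $\mathbb{Z}_2$ and $\{0,2\}\subset\mathbb{Z}_4$), subtract the identity to get $2^{r+s}-1$ elements of order two, and obtain the order-four count by subtracting from $|G|=2^{r+2s}$. No gaps.
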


\begin{proof}
By Cauchy's Theorem, group $G$ has at least one element of order two. We first count the number of elements of order two in group $G$.

Let $\alpha = (\alpha_1, \ldots, \alpha_r, \alpha_{r+1}, \ldots, \alpha_s)$ be an arbitrary element of order two in $G$. Then $\alpha_i \in \{0,2\} \subsetneq \mathbb{Z}_4$ for all $i\;(r+1 \le i \le s)$. Since $|\alpha| = 2$, it follows that $\alpha_i \neq 0$ for some $i\;(1\le i \le s)$. Hence the total number of elements of order two in $G$ is $2^{r+s}-1$.

Note that $G$ also has at least one element of order four. Since $|G| = 2^{r+2s}$ and identity is the only element of order one, therefore the total number of elements of order four in $G$ is
\[2^{r+2s} - (2^{r+s}-1) - 1 = 2^{r+s}(2^{s}-1). \]
\end{proof}

\begin{theorem}\label{2-adj-4}
Let $\alpha = (\alpha_1, \ldots, \alpha_r, \alpha_{r+1}, \ldots, \alpha_s)$ be an element of order two in $G$. Then $\alpha \in \langle x\rangle$ for some $x\in G$ of order four if and only if $\alpha_i = 0$ for all $i \;(1\le i \le r)$.
\end{theorem}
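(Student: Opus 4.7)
The plan is to exploit a single structural fact: in a cyclic group of order $4$, there is exactly one element of order $2$ (namely $2x$, where $x$ generates the group). Writing $G$ additively, the hypothesis $\alpha \in \langle x \rangle$ with $|\alpha| = 2$ and $|x| = 4$ is therefore equivalent to $\alpha = 2x$ for some $x \in G$ of order $4$. The whole statement then reduces to asking: which order-$2$ elements of $G$ lie in the image of the doubling map restricted to order-$4$ elements?

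For the forward direction, assume $\alpha = 2x$ with $|x| = 4$. Write $x$ coordinatewise. For each index $i$ with $1 \le i \le r$, the $i$-th coordinate of $x$ lives in $\mathbb{Z}_2$, and doubling in $\mathbb{Z}_2$ always returns $0$. Hence $\alpha_i = 2 x_i = 0$ for $1 \le i \le r$, which is precisely the stated conclusion.

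For the converse, assume $\alpha_i = 0$ for every $1 \le i \le r$. Because $|\alpha| = 2$, each of the remaining coordinates $\alpha_j$ (for $r+1 \le j \le r+s$) lies in $\{0,2\} \subset \mathbb{Z}_4$, and at least one such $\alpha_j$ equals $2$ (otherwise $\alpha$ would be the identity). Define $x \in G$ by setting $x_i = 0$ for $1 \le i \le r$, and for $r+1 \le j \le r+s$ setting $x_j = 1$ if $\alpha_j = 2$ and $x_j = 0$ if $\alpha_j = 0$. Then at least one $\mathbb{Z}_4$-coordinate of $x$ equals $1$, so $|x| = 4$, and $2x = \alpha$ by construction. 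Hence $\alpha \in \langle x \rangle$, as required.

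There is no substantive obstacle here; the only thing to watch is the bookkeeping between the $\mathbb{Z}_2$ and $\mathbb{Z}_4$ coordinates, ensuring that we correctly observe that the $\mathbb{Z}_2$-coordinates of any element of $2G$ are forced to be $0$ (which forces necessity) and that in each $\mathbb{Z}_4$-coordinate the element $2 \in \mathbb{Z}_4$ has a square root of order $4$ (which gives sufficiency).
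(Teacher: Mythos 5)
Your proof is correct and follows essentially the same route as the paper's: necessity from the fact that doubling kills every $\mathbb{Z}_2$-coordinate, and sufficiency by the same explicit construction of $x$ (zeros in the $\mathbb{Z}_2$-slots, $1$ or $0$ in each $\mathbb{Z}_4$-slot according to whether $\alpha_j$ is $2$ or $0$). If anything you are a bit more careful than the paper, since you explicitly justify the reduction of $\alpha\in\langle x\rangle$ to $\alpha=2x$ via the uniqueness of the order-two element of a cyclic group of order four, and you verify that the constructed $x$ really has order four.
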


\begin{proof}
Let $x = (x_1, \ldots, x_r, x_{r+1},\ldots, x_s)$ be an element of order four in $G$ such that $\alpha \in \langle x\rangle$. Then $2x=\alpha$ and therefore
$2 x_i=\alpha_i$ for all $i \;(1\le i \le s)$. We claim that $\alpha_i = 0$ for all $i\;(1\le i \le r)$. If not, let $\alpha_k = 1$ for some $k\;(1\le k \le r)$. Then $2x_k=1$ which is a contradiction since $x_k \in \mathbb{Z}_2$.

Conversely, we assume that $\alpha_i = 0$ for all $i \;(1\le i \le r)$. Consider an element $x= (x_1, \ldots, x_r, x_{r_1}, \ldots, x_s)$ of order four in $G$ such that $x_i = 0$ for $i\;(1\le i \le r)$, and for $i\;(r+1 \le i \le s)$, if
\begin{enumerate}
\item[\rm(i)] $\alpha_i = 0$, then $x_i = 0$
\item[\rm(ii)] $\alpha_i \neq 0$, then $x_i = 1$.
\end{enumerate}
We claim that $\alpha \in \langle x \rangle$. Since $x_i = 0$ for all $i\;(1 \le i \le r)$, it follows that $2x_i = 0 = \alpha_i$ for $i\;(1 \le i \le r)$.

Now for $i\; (r+1 \le i \le s)$, if $\alpha_i = 0$, then $x_i = 0$ and therefore $2x_i = 0 = \alpha_i$.
Further, if $\alpha_i \neq 0$, then $\alpha_i = 2$. In this case $x_i = 1$ which gives $2 x_i = 2 = \alpha_i$. Consequently $2x = \alpha$, as desired.
This completes the proof.
\end{proof}

The following result characterizes the degree of an element of order two in $G$.

\begin{theorem}\label{deg-2-ord}
Let $\alpha = (\alpha_1, \ldots, \alpha_r, \alpha_{r+1}, \ldots, \alpha_s)$ be a vertex in graph $\mathcal{G}(G)$ such that $|\alpha| = 2$. If
\begin{enumerate}
\item[\rm(i)] $\alpha_i \neq 0$ for some $i\;(1\le i \le r)$, then $\deg(\alpha) = 1$.
\item[\rm(ii)] $\alpha_i = 0$ for all $i\;(1\le i \le r)$, then $\deg(\alpha) = 2^{r+s}+1$.
\end{enumerate}
\end{theorem}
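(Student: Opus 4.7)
The plan is to work directly from the definition of the power graph by describing which elements are adjacent to a fixed order-two vertex $\alpha$. Every element of $G = \mathbb{Z}_2^r \times \mathbb{Z}_4^s$ has order dividing $4$, i.e.\ order $1$, $2$, or $4$. A vertex $\beta \neq \alpha$ is adjacent to $\alpha$ iff either $\beta \in \langle\alpha\rangle$ or $\alpha \in \langle\beta\rangle$. Since $\langle\alpha\rangle = \{0,\alpha\}$, the first possibility contributes only the identity $0$. For the second possibility we split by $|\beta|$: if $|\beta|=1$ then $\beta=0$; if $|\beta|=2$ then $\langle\beta\rangle=\{0,\beta\}$ and $\alpha\in\langle\beta\rangle$ forces $\beta=\alpha$, excluded; if $|\beta|=4$ then $\langle\beta\rangle=\{0,\beta,2\beta,3\beta\}$ and $|\alpha|=2$ forces $\alpha = 2\beta$. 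Hence
\[
\deg(\alpha) \;=\; 1 \;+\; \#\{\beta\in G : |\beta|=4,\ 2\beta=\alpha\}.
\]

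For case (i), the hypothesis $\alpha_i\neq 0$ for some $i\le r$ triggers Theorem~\ref{2-adj-4}: no order-four element $x$ satisfies $\alpha\in\langle x\rangle$. Thus the set in the display above is empty, and $\deg(\alpha)=1$.

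For case (ii), where $\alpha_i = 0$ for all $i\le r$, I would count the solutions of $2\beta = \alpha$ coordinate by coordinate. In the first $r$ coordinates, $\beta_i\in\mathbb{Z}_2$ and $2\beta_i = 0 = \alpha_i$ is automatic, giving $2$ choices per coordinate for a total of $2^r$. In the last $s$ coordinates, $\beta_i\in\mathbb{Z}_4$ and the equation $2\beta_i=\alpha_i\in\{0,2\}$ has exactly two solutions (namely $\{0,2\}$ if $\alpha_i=0$ and $\{1,3\}$ if $\alpha_i=2$), giving $2^s$ in total. So there are $2^{r+s}$ elements $\beta$ with $2\beta=\alpha$. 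Every such $\beta$ automatically has order $4$: $2\beta=\alpha\neq 0$ rules out orders $1$ and $2$, while $4\beta = 2\alpha = 0$ forces the order to divide $4$. Adding the contribution of the identity gives $\deg(\alpha) = 2^{r+s}+1$.

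The main obstacle is conceptually mild — it is really just bookkeeping — but one should be careful not to overcount or miss neighbors: the only subtle point is ruling out that two \emph{distinct} order-two elements can be adjacent, which rests on the fact that an order-two cyclic subgroup contains only the identity and its generator. Everything else reduces to Theorem~\ref{2-adj-4} for the qualitative dichotomy and to a coordinate-wise count for the quantitative side.
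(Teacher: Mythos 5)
Your proof is correct and follows essentially the same route as the paper: reduce adjacency of $\alpha$ to the identity plus the order-four elements $\beta$ with $2\beta=\alpha$, invoke Theorem~\ref{2-adj-4} for case (i), and count solutions of $2\beta=\alpha$ coordinate-wise to get $2^{r+s}$ in case (ii). Your explicit check that every solution of $2\beta=\alpha$ automatically has order four is a small point the paper leaves implicit, but otherwise the arguments coincide.
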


\begin{proof}
Note that the identity element is adjacent to every vertex of  $\mathcal{G}(G)$. Therefore $\deg(\alpha) \ge 1$.
If $\beta$ is another vertex of $\mathcal{G}(G)$ such that $|\beta| = 2$, then clearly $\alpha$ and $\beta$ are non-adjacent in $\mathcal{G}(G)$.

\vspace{0.1cm}

(i) From Theorem \ref{2-adj-4}, it is obvious that $\deg(\alpha) = 1$.

\vspace{0.1cm}

(ii) From Theorem \ref{2-adj-4}, there is an element of order four in $G$ such that the element is adjacent to $\alpha$ in $\mathcal{G}(G)$. Therefore, it is sufficient to count the number of elements of order four in $G$ which are adjacent to $\alpha$.

Let $x = (x_1,\ldots,x_r,x_{r+1},\ldots,x_s)$ be an element of order four in $G$ such that $x$ is adjacent to $\alpha$ in  $\mathcal{G}(G)$. Then $2x =\alpha$ and therefore $2x_i = \alpha_i$ for all $i \;(1\le i \le s)$.

For $i\;(1\le i \le r)$, since $\alpha_i = 0$, it follows that $2x_i = 0$. Therefore, for all $i\;(1 \le i \le r)$, the equations $2x_i= 0$ has $2^r$ solutions.

For $i\;(r+1 \le i \le s)$, since $|\alpha| = 2$, we have $\alpha_i \in \{0, 2\}$. If $\alpha_i = 0$, then $2x_i = 0$ and therefore $x_i \in \{0, 2\}$. If $\alpha_i = 2$, then $2x_i = 2$ and therefore $x_i \in \{1, 3\}$. Therefore, for all $i\;(r+1 \le i \le s)$, the equations $2x_i=\alpha_i$ has $2^s$ solutions.

Thus there are $2^{r+s}$ elements of order four in $G$ which are adjacent to $\alpha$ in power graph $\mathcal{G}(G)$ and consequently $\deg(\alpha) = 2^{r+s}+1$.
\end{proof}

\begin{proposition}\label{deg-2}
The total number of elements of order two in group $G$ having degree $1$ and degree $2^{r+s}+1$ in $\mathcal{G}(G)$ are $2^s(2^r-1)$ and $(2^s-1)$, respectively.
\end{proposition}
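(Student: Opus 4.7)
The plan is to combine the dichotomy established in Theorem \ref{deg-2-ord} with the total count from Theorem \ref{num-2-ord}, reducing the proposition to a simple counting exercise. Theorem \ref{deg-2-ord} shows that every element $\alpha$ of order two in $G$ has degree exactly $1$ or exactly $2^{r+s}+1$ in $\mathcal{G}(G)$, and the dichotomy is governed by whether the first $r$ coordinates of $\alpha$ are all zero. So I would first count the elements of order two that fall into the ``all first $r$ coordinates zero'' class directly, and then obtain the other count by subtraction from the total $2^{r+s}-1$ given by Theorem \ref{num-2-ord}.

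For the direct count, I would argue as follows. An element $\alpha=(\alpha_1,\ldots,\alpha_r,\alpha_{r+1},\ldots,\alpha_s)$ of $G$ has order dividing two precisely when $\alpha_i\in\{0,1\}\subseteq\mathbb{Z}_2$ for $1\le i\le r$ and $\alpha_i\in\{0,2\}\subseteq\mathbb{Z}_4$ for $r+1\le i\le s$. Imposing the extra condition $\alpha_i=0$ for all $1\le i\le r$ forces the first $r$ coordinates, and leaves the remaining $s$ coordinates free in a two-element set; subtracting the identity (the unique element of order one among these) yields exactly $2^s-1$ elements of order two. By Theorem \ref{deg-2-ord}(ii), each such $\alpha$ has degree $2^{r+s}+1$ in $\mathcal{G}(G)$.

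For the remaining class, Theorem \ref{num-2-ord} gives $2^{r+s}-1$ elements of order two in total, and Theorem \ref{deg-2-ord} partitions these into the two degree-classes. Hence the number of elements of order two having $\alpha_i\neq 0$ for some $1\le i\le r$ is
\[
(2^{r+s}-1)-(2^s-1)=2^{r+s}-2^s=2^s(2^r-1),
\]
and each such element has degree $1$ by Theorem \ref{deg-2-ord}(i). This yields both claims and completes the proof.

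There is no real obstacle here; the statement is essentially a bookkeeping corollary of the two preceding theorems, and the only care needed is to ensure that elements of order exactly two (not just dividing two) are counted, which is handled by subtracting the identity in the restricted class.
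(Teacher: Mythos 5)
Your proposal is correct and follows essentially the same route as the paper: count the degree-$(2^{r+s}+1)$ elements directly as those of order two whose first $r$ coordinates vanish (giving $2^s-1$ after excluding the identity), then subtract from the total $2^{r+s}-1$ of Theorem \ref{num-2-ord} to obtain $2^s(2^r-1)$.
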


\begin{proof}
Let $\alpha = (\alpha_1, \ldots, \alpha_r, \alpha_{r+1}, \ldots, \alpha_s)$ be an element of order two in $G$. If $\deg(\alpha) = 2^{r+s}+1$, then $\alpha_i = 0$ for all $i\;(1\le i \le r)$. Since $|\alpha| = 2$, it follows that $\alpha_i \in \{0, 2\}$ for all $i\;(r+1 \le i \le s)$. Therefore the total number of elements of order two in $G$ having degree $2^{r+s}+1$ in $\mathcal{G}(G)$ is $(2^s-1)$.

From Theorem \ref{num-2-ord}, the total number of elements of order two in $G$ is $2^{r+s}-1$. Therefore the total number of elements of order two in $G$ having degree one in $\mathcal{G}(G)$ is $(2^{r+s}-1) - (2^s-1) = 2^s(2^r-1)$.
\end{proof}

As a direct consequence of Theorem \ref{deg-2-ord} and Proposition \ref{deg-2}, we have the following immediate theorem that describes the power graph $\mathcal{G}(G)$.

\begin{theorem}\label{strt-2}
$\mathcal{G}(G) \cong K_1+\big(2^s(2^r-1)K_1 \cup (2^s-1)(K_1+2^{r+s-1}K_2)\big)$.
\end{theorem}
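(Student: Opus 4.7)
The plan is to write $\mathcal{G}(G) = K_1 + H$, where the $K_1$ is the identity (which is adjacent to every other vertex, hence joined to $H$) and $H$ is the subgraph of $\mathcal{G}(G)$ induced on $G \setminus \{0\}$. The nontrivial task is to show that $H \cong 2^s(2^r-1)K_1 \cup (2^s-1)(K_1 + 2^{r+s-1}K_2)$. Since every non-identity element of $G$ has order $2$ or $4$, I would partition $V(H)$ into three pieces using Theorem \ref{num-2-ord}, Theorem \ref{deg-2-ord} and Proposition \ref{deg-2}: the $2^s(2^r-1)$ order-two elements of degree $1$ (call them \emph{low}), the $2^s-1$ order-two elements of degree $2^{r+s}+1$ (call them \emph{high}), and the $2^{r+s}(2^s-1)$ elements of order four.

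The first piece is immediate: each low element has degree $1$ in $\mathcal{G}(G)$ with its unique neighbor being the identity, so it becomes isolated in $H$, accounting for the $2^s(2^r-1)K_1$ summand. For the remaining vertices I would fix a high element $\alpha$. From the proof of Theorem \ref{deg-2-ord}, the order-four neighbors of $\alpha$ in $\mathcal{G}(G)$ are exactly the $2^{r+s}$ solutions $x$ of $2x = \alpha$ with $|x| = 4$, and $\alpha$ is non-adjacent to every other order-two element in $H$. The crucial step is then to pair these $2^{r+s}$ neighbors as $\{x, -x\} = \{x, 3x\}$: each pair generates a common cyclic subgroup $\langle x \rangle$ of order $4$ passing through $\alpha$, and within such a pair the two elements are mutually powers and hence adjacent, while two order-four vertices from different cyclic subgroups cannot be adjacent since equal order would force equality of the generated subgroups if one were a power of the other. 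This produces exactly $2^{r+s-1}$ edges on the neighborhood of $\alpha$, and joining $\alpha$ to all of them yields a block isomorphic to $K_1 + 2^{r+s-1}K_2$.

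Finally I would verify that these $2^s - 1$ blocks, together with the isolated low vertices, exhaust $H$ without overlap or extra edges. Disjointness of the order-four neighborhoods of two distinct high elements $\alpha \neq \alpha'$ is clear because $2x$ determines $\alpha$ uniquely; no order-four vertex is adjacent to any low order-two vertex by Theorem \ref{2-adj-4}; and the bookkeeping check $2^s(2^r-1) + (2^s-1)(1 + 2^{r+s}) = 2^{r+2s} - 1 = |G| - 1$ confirms that all vertices are accounted for. The main conceptual hurdle is the pairing argument described above, namely identifying the $2^{r+s-1}$ order-four cyclic subgroups through $\alpha$ and ruling out adjacencies between order-four elements lying in distinct such subgroups; once that is done, the rest of the argument is a direct assembly of the degree and adjacency data already secured by the preceding theorems.
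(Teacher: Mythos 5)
Your proof is correct and follows the same route the paper takes: the paper offers no proof at all, asserting the theorem as a ``direct consequence'' of Theorem \ref{deg-2-ord} and Proposition \ref{deg-2}, and your argument is precisely the detailed verification of that assertion. You also correctly identify and supply the one step those results do not literally cover, namely pairing the $2^{r+s}$ order-four neighbors of a high-degree involution into $\{x,-x\}$ couples to get the $2^{r+s-1}K_2$ factor while ruling out cross-adjacencies.
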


The next theorem computes the Laplacian spectrum of the power graph $\mathcal{G}(G)$.

\begin{theorem}\label{ls-z24}
The Laplacian polynomial of the power graph $\mathcal{G}(G)$ is
\[x(x-1)^{2^{r+s}-2} (x-2)^{(2^s-1)(2^{r+s-1}-1)} (x-4)^{2^{r+s-1}(2^s-1)}(x-2-2^{r+s})^{2^s-1}(x-2^{r+2s}).\]
\end{theorem}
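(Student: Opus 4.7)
The plan is to compute $\Theta(\mathcal{G}(G), x)$ by combining Theorems~\ref{d-union} and~\ref{d-join} with the structural decomposition of Theorem~\ref{strt-2}. Writing $\mathcal{G}(G) \cong K_1 + C$ where
\[
C = 2^s(2^r-1)K_1 \,\cup\, (2^s-1)\bigl(K_1 + 2^{r+s-1}K_2\bigr),
\]
the entire argument reduces to one outer join, one inner join, and two disjoint unions; the remaining task is purely a matter of tracking exponents.

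First, I would compute the Laplacian polynomial of the inner building block $A := K_1 + 2^{r+s-1}K_2$. Since $\Theta(K_2, x) = x(x-2)$, Theorem~\ref{d-union} gives $\Theta(2^{r+s-1}K_2, x) = [x(x-2)]^{2^{r+s-1}}$, and then Theorem~\ref{d-join} applied with $n_1 = 1$ and $n_2 = 2^{r+s}$ (after the factor $x - 2^{r+s}$ cancels against $\Theta(K_1, x - n_2)$) should produce
\[
\Theta(A, x) = x\,(x - 1 - 2^{r+s})\,(x - 1)^{2^{r+s-1} - 1}\,(x - 3)^{2^{r+s-1}}.
\]

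Next, Theorem~\ref{d-union} gives $\Theta(C, x) = x^{2^s(2^r-1)}\,[\Theta(A, x)]^{2^s-1}$, and the identity $2^s(2^r-1) + (2^s - 1) = 2^{r+s} - 1$ collapses the $x$-factors to $x^{2^{r+s}-1}$. Finally, since $|C| = 2^{r+2s} - 1$, applying Theorem~\ref{d-join} to $\mathcal{G}(G) = K_1 + C$ with $n_1 = 1$ (the factor $x - 2^{r+2s} + 1$ again cancels with $\Theta(K_1, x - n_2)$) yields
\[
\Theta(\mathcal{G}(G), x) = \frac{x\,(x - 2^{r+2s})}{x - 1}\,\Theta(C, x - 1).
\]
Substituting $x \mapsto x - 1$ in $\Theta(C, x)$ shifts the factors $(x-1)$, $(x-3)$, and $(x-1-2^{r+s})$ to $(x-2)$, $(x-4)$, and $(x-2-2^{r+s})$ respectively, and promotes $x^{2^{r+s}-1}$ to $(x-1)^{2^{r+s}-1}$; a single $(x-1)$ then cancels against the denominator, producing exactly the product stated in the theorem.

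The main obstacle I anticipate is exponent bookkeeping: each application of Theorem~\ref{d-join} introduces a rational prefactor that must cleanly cancel against eigenvalues already present in the factored components, and the $(2^s - 1)$-fold disjoint union amplifies every inner exponent, so one must be careful not to lose or double-count the $(x-1)$ factors in the final simplification.
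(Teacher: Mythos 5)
Your proposal is correct and takes essentially the same approach as the paper: both rely on the decomposition of Theorem \ref{strt-2} and repeated application of Theorems \ref{d-union} and \ref{d-join}, differing only in that you evaluate the inner join $K_1 + 2^{r+s-1}K_2$ first and then apply the outer join, while the paper peels off the outer join first. Your intermediate polynomial $\Theta(A,x) = x(x-1-2^{r+s})(x-1)^{2^{r+s-1}-1}(x-3)^{2^{r+s-1}}$, the exponent identity $2^s(2^r-1)+(2^s-1)=2^{r+s}-1$, and the final cancellation all check out.
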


\begin{proof}
From Theorem \ref{strt-2}, we have \[\mathcal{G}(G) \cong K_1+\big(2^s(2^r-1)K_1 \cup (2^s-1)(K_1+2^{r+s-1}K_2)\big).\]

By using Theorem \ref{d-join}, we get the following Laplacian polynomial $\Theta(\mathcal{G}(G), x)$:
\begin{equation*}
\begin{split}
&  \frac{x(x-2^{r+2s})}{(x-1)(x-2^{r+2s}+1)} \Theta(K_1, x-2^{r+2s}+1) \Theta(2^s(2^r-1)K_1 \cup (2^s-1)(K_1+2^{r+s-1}K_2), x-1)\\
&= \frac{x(x-2^{r+2s})}{(x-1)} \Theta(2^s(2^r-1)K_1 \cup (2^s-1)(K_1+2^{r+s-1}K_2), x-1)\\
&=\frac{x(x-2^{r+2s})}{(x-1)} \left(\Theta(K_1, x-1)\right)^{2^s(2^r-1)} \left(\Theta(K_1+2^{r+s-1}K_2, x-1)\right)^{2^s-1}(\mbox{using Theorem}\; \ref{d-union})
\end{split}
\end{equation*}

Again by using Theorem \ref{d-union} and Theorem \ref{d-join} to the above polynomial, we get
\begin{equation*}
\begin{split}
&= \frac{x(x-2^{r+2s})}{(x-1)} (x-1)^{2^{s}(2^{r}-1)} \left(\frac{(x-1)(x-2-2^{r+s})}{(x-2)(x-1-2^{r+s})} \Theta(K_1,x-1-2^{r+s}) \left(\Theta(K_2, x-2)\right)^{2^{r+s-1}}\right)^{2^s-1}\\
&= \frac{x(x-2^{r+2s})}{(x-1)}(x-1)^{2^{s}(2^{r}-1)}\left(\frac{(x-1)(x-2-2^{r+s})}{(x-2)}\left((x-2)(x-4)\right)^{2^{r+s-1}}\right)^{2^s-1}\\
&= x(x-1)^{2^{r+s}-2} (x-2)^{(2^s-1)(2^{r+s-1}-1)} (x-4)^{2^{r+s-1}(2^s-1)}(x-2-2^{r+s})^{2^s-1}(x-2^{r+2s}).
\end{split}
\end{equation*}
\end{proof}

We know that $\omega(\mathbb{Z}_2^r \times \mathbb{Z}_{4}^s) = \{1,2, 4\}$. Therefore, we have the following straightforward corollary of the Theorem \ref{1-main} and Theorem \ref{ls-z24}.

\begin{corollary} Let $H$ be a finite abelian group such that the power graph $\mathcal{G}(H)$ is planar. Then
\begin{enumerate}
\item[\rm(i)] $\omega(H) \subseteq \mbox{L-spec}(\mathcal{G}(H))$.
\item[\rm(ii)] the graph $\mathcal{G}(H)$ is Laplacian integral.
\end{enumerate}
\end{corollary}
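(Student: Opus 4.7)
The plan is to reduce to the Chelvam--Sattanathan classification of planar power graphs of finite abelian groups cited earlier, which forces $H$ to be isomorphic to one of $\mathbb{Z}_2^n$, $\mathbb{Z}_3^n$, $\mathbb{Z}_4^n$, or $\mathbb{Z}_2^r\times\mathbb{Z}_4^s$. Since these are exactly the families for which the present paper has already computed a fully factored Laplacian polynomial, both conclusions (i) and (ii) should simply be read off from those factorizations.

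For the first three families, the groups are instances of $\mathbb{Z}_{p^m}^n$ (with $(p,m)=(2,1)$, $(3,1)$, and $(2,2)$ respectively), so Theorem~\ref{1-main} presents $\Theta(\mathcal{G}(H),x)$ as a product of linear factors $(x-c)$ with $c$ a non-negative integer. This at once gives Laplacian integrality in these cases, establishing part~(ii). Part~(i) then follows from Corollary~\ref{sp-gr-gra}, since $\omega(\mathbb{Z}_{p^m}^n)=\{1,p,\ldots,p^m\}$ and each of these values is explicitly exhibited as a root of the Laplacian polynomial given by Theorem~\ref{1-main}.

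For $H=\mathbb{Z}_2^r\times\mathbb{Z}_4^s$, one invokes Theorem~\ref{ls-z24} directly. The factored polynomial has all integer roots drawn from $\{0,1,2,4,2+2^{r+s},2^{r+2s}\}$, which gives~(ii); and since $\omega(H)=\{1,2,4\}$ with each of $1$, $2$, and $4$ visibly occurring as a linear factor, (i) follows as well.

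The main subtlety is to confirm in each case that the exponent attached to each required linear factor is at least one, so that the eigenvalue genuinely appears with positive multiplicity. This amounts to checking the small-parameter regimes allowed by the planarity hypothesis and verifying that the exponents arising in Theorem~\ref{1-main} and Theorem~\ref{ls-z24} (such as $l-1$, $l\,p^{(i-1)(n-1)}(p^{n-1}-1)$, and $(2^s-1)(2^{r+s-1}-1)$) are strictly positive on the factors corresponding to members of $\omega(H)$. This bookkeeping, combined with the previously established factorizations, is the only real work beyond the classification step.
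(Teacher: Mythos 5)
Your route is the same one the paper takes: invoke the Chelvam--Sattanathan classification and read both conclusions off the factorizations in Theorem \ref{1-main} and Theorem \ref{ls-z24}. Part (ii) does come out this way, since every root displayed in either factorization is an integer. But the ``bookkeeping'' you defer in your last paragraph is not a formality --- it is exactly where part (i) breaks, and if you actually carry it out you will find that it fails. Two concrete failures. First, the classification as quoted includes the $n=1$ members $\mathbb{Z}_2$, $\mathbb{Z}_3$, $\mathbb{Z}_4$ (their power graphs are $K_2$, $K_3$, $K_4$, all planar), and Corollary \ref{sp-gr-gra} is stated only for $n>1$; indeed $\mbox{L-spec}(\mathcal{G}(\mathbb{Z}_4))=\{0^1,4^3\}$ omits both $1$ and $2$, for the same reason as the paper's own Example \ref{not-spec}. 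Second, and more seriously, even for $n\ge 2$ the family $\mathbb{Z}_2^n$ (i.e.\ $p=2$, $m=1$) fails: in Theorem \ref{1-main} the factor $(x-p^m)=(x-2)$ carries exponent $l\,p^{(m-1)(n-1)}(\phi(p^m)-1)=l(\phi(2)-1)=0$, so $2$ is \emph{not} a root. This is visible directly: $\mathcal{G}(\mathbb{Z}_2^n)$ is the star $K_1+(2^n-1)K_1$, with $\mbox{L-spec}=\{0^1,1^{2^n-2},(2^n)^1\}$, which does not contain $2\in\omega(\mathbb{Z}_2^n)$ once $n\ge 2$.

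So the gap is genuine: your claim that each element of $\omega(H)$ is ``explicitly exhibited as a root'' is false for the $\mathbb{Z}_2^n$ family and for the cyclic cases $\mathbb{Z}_2,\mathbb{Z}_3,\mathbb{Z}_4$, and the appeal to Corollary \ref{sp-gr-gra} does not cover them (that corollary is itself wrong when $p^m=2$, by the same exponent computation). The cases that do go through as you describe are $\mathbb{Z}_3^n$ and $\mathbb{Z}_4^n$ with $n\ge 2$, and $\mathbb{Z}_2^r\times\mathbb{Z}_4^s$ with $r,s\ge 1$, where the relevant exponents $l-1$, $l(\phi(p^m)-1)$, $l(p^{n-1}-1)$, $2^{r+s}-2$, $(2^s-1)(2^{r+s-1}-1)$ and $2^{r+s-1}(2^s-1)$ are all positive. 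To be fair to you, the paper's own ``proof'' is a one-line assertion that skips precisely the check you flagged; the difference is that you correctly identified the check as the crux but asserted it would succeed, when in fact it does not, so part (i) as stated needs either a restriction on $H$ or a different argument.
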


\section{Conclusions}

In the present paper, we determined the Laplacian spectrum of the power graphs $\mathcal{G}(\mathbb{Z}_{p^m}^n)$ and $\mathcal{G}(\mathbb{Z}_2^r \times \mathbb{Z}_{4}^s)$. We proved that the spectrum of group $\mathbb{Z}_{p^m}^n$ is contained in the Laplacian spectrum of graph $\mathcal{G}(\mathbb{Z}_{p^m}^n)$. We also derived a precise formula for the total number of distinct Laplacian eigenvalues of the graph  $\mathcal{G}(\mathbb{Z}_{p^m}^n)$. Finally, for a finite abelian group $G$ whose power graph $\mathcal{G}(G)$ is planar, we proved that the spectrum of group $G$ is contained in the Laplacian spectrum of graph $\mathcal{G}(G)$.


\begin{thebibliography}{10}


\bibitem{aba13}
 J. ~Abawajy,  A. ~Kelarev, and M. ~Chowdhury.
 \newblock Power graphs: A survey.
\newblock {\em Electronic Journal of Graph Theory and Applications}, 1(2):125--147, 2013.



\bibitem{bapat14}
R. ~B. Bapat.
\newblock{\em Graphs and Matrices}, 2nd edition.
\newblock Springer, 2014.


\bibitem{bapt14}
R. ~B. Bapat and S. ~Roy.
\newblock On the adjacency matrix of a block graph.
\newblock{\em Linear and Multilinear Algebra}, 62(3):406--418, 2014.


\bibitem{came10}
P. ~J. Cameron.
\newblock The power graph of a finite group, II.
\newblock{\em Journal of Group Theory}, 13(6): 779--783, 2010.


\bibitem{came11}
P. ~J. Cameron and S. ~Ghosh.
\newblock The power graph of a finite group.
\newblock{\em Discrete Mathematics}, 311(13): 1220--1222, 2011.



\bibitem{chak09}
I. ~Chakrabarty, S. ~Ghosh, and M. ~K. Sen.
\newblock Undirected power graphs of semigroups.
\newblock {\em Semigroup Forum}, 78(1): 410--426, 2009.


\bibitem{chapan14}
S. ~Chattopadhyay and P. ~Panigrahi.
\newblock Connectivity and planarity of power graphs of finite cyclic, dihedral and dicyclic groups.
\newblock {\em Algebra and Discrete Mathematics}, 18(1): 42--49, 2014.


\bibitem{chapan15}
S. ~Chattopadhyay and P. ~Panigrahi.
\newblock On Laplacian spectrum of power graphs of finite cyclic and dihedral groups.
\newblock {\em Linear and Multilinear Algebra}, 63(7): 1345--1355, 2015.


\bibitem{chat17}
S. ~Chattopadhyay and P. ~Panigrahi.
\newblock On sum of powers of the Laplacian eigenvalues of power graphs of certain finite groups.
\newblock {\em Electronic Notes in Discrete Mathematics}, 63: 137--143, 2017.


\bibitem{chatt17}
S. ~Chattopadhyay, P. ~Panigrahi, and F. ~Atik.
\newblock Spectral radius of power graphs on certain finite groups.
\newblock {\em Indagationes Mathematicae}, In Press.


\bibitem{chel13}
T. ~T. Chelvam  and M. ~Sattanathan.
\newblock Power graph of finite abelian groups.
\newblock {\em Algebra and Discrete Mathematics}, 16(1):33--41, 2013.


\bibitem{dum99}
D.~S. Dummit and R.~M. Foote.
\newblock {\em Abstract Algebra}, 3rd edition.
\newblock John Wiley and Sons, 2004.



\bibitem{kela00}
A. ~V. Kelarev and S. ~J. Quinn.
\newblock A combinatorial property and power graphs of groups.
\newblock {\em Contributions to General Algebra 12 (Vienna, 1999)}, pages 229--235,
\newblock Heyn, Klagenfurt, 2000.



\bibitem{kel65}
A.~K.~Kel'mans.
\newblock The number of trees in a graph {I}.
\newblock {\em Autom. Remote Control}, 26: 2118--2129, 1965.



\bibitem{mck14}
E. ~McKemmie.
\newblock Power graphs of finite groups.
\newblock BA project supervised by Peter M. Neumann at Oxford University, 2014.



\bibitem{mehr17}
Z. ~Mehranian, A. ~Gholami, and A. ~R. ~Ashrafi.
\newblock The Spectra of power graphs of certain finite groups.
\newblock {\em Linear and Multilinear Algebra}, 65(5):1003--1010, 2017.



\bibitem{mohar91}
B. Mohar.
\newblock The {L}aplacian spectrum of graphs. 
\newblock {\em Graph Theory, Combinatorics, and Applications}, pages 871--898, 
\newblock Wiley, New York, 1991.
     



\bibitem{west00}
D. ~B. West.
\newblock {\em Introduction to Graph Theory}, 2nd edition.
\newblock Prentice Hall, 2000.

\end{thebibliography}

\end{document}